\newcommand{\bga}{\boldsymbol{\alpha}}
\newcommand{\bgd}{\boldsymbol{\delta}}
\newcommand{\restr}{\rceil}
\newcommand{\jj}{\vee}
\newcommand{\mm}{\wedge}
\newcommand{\es}{\varnothing}
\DeclareMathOperator{\length}{len}
\newcommand{\tbf}{\textbf}
\def\cng#1=#2(#3){#1\equiv#2\pmod{#3}}
\newtheorem{lemma}{Lemma}
\begin{document}
\title{A technical lemma for congruences of finite lattices}  
\author{G. Gr\"{a}tzer} 
\email[G. Gr\"atzer]{gratzer@me.com}
\address{Department of Mathematics\\
  University of Manitoba\\
  Winnipeg, MB R3T 2N2\\
  Canada}

\urladdr[G. Gr\"atzer]{http://server.maths.umanitoba.ca/homepages/gratzer/}

\date{Aug. 21, 2013}
\subjclass[2010]{Primary: 06B10.}
\keywords{finite lattice, congruence.}

\begin{abstract}
The classical Technical Lemma for congruences is not difficult to prove
but it is very efficient in its applications.
We present here a Technical Lemma 
for congruences on \emph{finite lattices}.
This is not difficult to prove either but it has already has proved its usefulness in some applications.
\end{abstract}

\maketitle

Here is the classical Technical Lemma for congruences, see
G. Gr\"atzer and E.\,T. Schmidt~\cite{GS58d} and
F. Maeda \cite{fM58}.

\begin{lemma}\label{L:I.3.8}
A reflexive binary relation $\bga$ 
on a lattice $L$ is a congruence relation
if{}f the following three properties are satisfied 
for any $x, y, z, t \in L$:
\begin{enumerate}
\item[(i)]  $\cng x = y(\bga)$\quad if{}f\quad 
$\cng x \mm y = x \jj y(\bga)$.
\item[(ii)]  Let $x \leq y \leq z$; 
then $\cng x = y(\bga)$ and $\cng y = z (\bga)$
imply that $\cng x = z(\bga)$.
\item[(iii)]  $x \leq y$ and $\cng x = y(\bga)$ 
imply that $\cng x \jj t = y \jj t (\bga)$ 
and $\cng x \mm t = {y \mm t}(\bga)$.
\end{enumerate}
\end{lemma}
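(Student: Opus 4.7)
\medskip
\noindent\textbf{Plan.} The forward implication is routine: if $\bga$ is a congruence, then (ii) is an instance of transitivity, (iii) of substitution, and (i) is a direct consequence of applying substitution to $\cng x = y(\bga)$ together with the reflexive $\cng y = y(\bga)$ (and similarly in the reverse direction). For the converse, assume (i), (ii), (iii) and reflexivity; we must establish symmetry, transitivity, and the substitution property. Symmetry is immediate: condition (i) is symmetric in $x$ and $y$, so $\cng x = y(\bga)$ yields $\cng x\mm y = x\jj y(\bga)$, which by (i) applied in the reverse direction gives $\cng y = x(\bga)$.

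The heart of the argument is transitivity for arbitrary (not necessarily comparable) $x,y,z$. Given $\cng x = y(\bga)$ and $\cng y = z(\bga)$, I plan to bridge $x\mm y\mm z$ and $x\jj y\jj z$ by the chain
\[
x\mm y\mm z \;\leq\; x\mm y \;\leq\; x\jj y \;\leq\; x\jj y\jj z.
\]
The middle link is (i) applied to $\cng x = y(\bga)$. For the first link, (i) applied to $\cng y = z(\bga)$ yields $\cng y\mm z = y\jj z(\bga)$, and (iii) with $t = x\mm y$ then gives $\cng x\mm y\mm z = x\mm y(\bga)$ (using $x\mm y \le y \le y\jj z$ to simplify $(y\jj z)\mm(x\mm y) = x\mm y$). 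The last link is dual: (iii) with $t = x\jj y$ applied to $\cng y\mm z = y\jj z(\bga)$ produces $\cng x\jj y = x\jj y\jj z(\bga)$. Two applications of (ii) along the chain now yield $\cng x\mm y\mm z = x\jj y\jj z(\bga)$. A double use of (iii)---first meeting with $x\jj z$, then joining with $x\mm z$---collapses this to $\cng x\mm z = x\jj z(\bga)$, and (i) delivers $\cng x = z(\bga)$.

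With transitivity in hand, the substitution property is a similar exercise. From $\cng x = y(\bga)$, (i) and (iii) (meeting with $t$) yield $x\mm y\mm t \equiv (x\jj y)\mm t \pmod{\bga}$; a further application of (iii)---meeting with $x\mm t$, respectively with $y\mm t$---shows that both $x\mm t$ and $y\mm t$ are congruent to $x\mm y\mm t$ modulo $\bga$, and symmetry plus transitivity yield $\cng x\mm t = y\mm t(\bga)$. The dual argument handles the join, and chaining $x\mm u \equiv y\mm u \equiv y\mm v \pmod{\bga}$ via transitivity gives the full substitution property from $\cng x = y(\bga)$ and $\cng u = v(\bga)$.

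The main obstacle is transitivity when $x,y,z$ are incomparable: (ii) provides only chain-transitivity, so the decisive observation is that a chain between the extremes $x\mm y\mm z$ and $x\jj y\jj z$ can be populated link-by-link using (i) together with (iii), after which (ii) bridges the gap and a final use of (iii) plus (i) delivers the conclusion.
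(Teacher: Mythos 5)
Your proof is correct: the chain $x\mm y\mm z\le x\mm y\le x\jj y\le x\jj y\jj z$, populated by (i) and (iii) and bridged by (ii), then collapsed to $\cng x\mm z=x\jj z(\bga)$ by meeting with $x\jj z$ and joining with $x\mm z$, is exactly the standard argument for the hard direction, and the remaining steps (symmetry from the symmetry of (i), substitution via $x\mm y\mm t$) all check out. Note that the paper itself gives no proof of this classical lemma, only references to the books where it is proved; your argument is essentially the one found there.
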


I stated and proved this Technical Lemma in all my lattice theory books, 
see for instance, \cite[Lemma I.3.8]{GLT2}, \cite[Lemma 1.1]{CFL}, 
and \cite[Lemma 11]{LTF}. Witness the numerous references to this lemma, for instance, in \cite{LTF}. The lemma is not difficult to prove, but it surely saves a lot of computation wherever we need to prove that a reflexive binary relation is a congruence relation.

In some recent research, G. Cz\'edli
and I, see \cite{gC13}, \cite{gC13a} and \cite{gG13a}, \cite{gG13c}
spent quite an effort
in proving that some equivalence relations on a planar semimodular lattices
with intervals as equivalence classes are congruences. 
The number of cases we had to consider
was dramatically cut by the following result.

\begin{lemma}\label{L:technical}
Let $L$ be a finite lattice. 
Let $\bgd$ be an equivalence relation on $L$
with intervals as equivalence classes.
Then $\bgd$ is a congruence relation if{}f the following condition 
and its dual hold:
\begin{equation}\label{E:cover}
\text{If $x$ is covered by $y,z \in L$ 
and $x \equiv y\,(\textup{mod}\, \bgd)$,
then $z \equiv y \jj z\,(\textup{mod}\,\bgd)$.}\tag{C${}_{\jj}$}
\end{equation}
\end{lemma}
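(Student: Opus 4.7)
The ``only if'' direction is immediate: if $\bgd$ is a congruence, $x$ is covered by $y, z$, and $x \equiv y \pmod{\bgd}$, then joining both sides with $z$ gives $z = x \jj z \equiv y \jj z \pmod{\bgd}$, establishing (C${}_{\jj}$); the dual holds symmetrically.

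For the converse, I verify the three conditions of the Classical Technical Lemma~\ref{L:I.3.8}. Conditions~(i) and~(ii) follow immediately from $\bgd$ being an equivalence relation with interval blocks: for~(i), if $x \equiv y \pmod{\bgd}$ then $x, y$ lie in a common interval-block, which also contains $x \mm y$ and $x \jj y$; conversely, the interval-block containing $x \mm y$ and $x \jj y$ contains $x, y$. Condition~(ii) is just transitivity.

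The substantive step is condition~(iii). Suppose $x \leq y$ and $x \equiv y \pmod{\bgd}$; I must show $x \jj t \equiv y \jj t \pmod{\bgd}$ for every $t \in L$ (the meet-case being dual). Taking a maximal chain $x = u_0 \prec u_1 \prec \cdots \prec u_n = y$ inside the interval-block containing $[x, y]$, whose consecutive pairs are $\bgd$-equivalent, I reduce by transitivity to the case $x \prec y$. It then suffices, given the prime pair $x \prec y$ with $x \equiv y \pmod{\bgd}$, to prove that $s \equiv s \jj y \pmod{\bgd}$ for every $s \geq x$; the choice $s = x \jj t$ yields~(iii). Along a maximal chain $x = z_0 \prec z_1 \prec \cdots \prec z_n = s$, I show inductively that $z_i \equiv z_i \jj y \pmod{\bgd}$: the base $i = 0$ is the hypothesis, and at the step the inductive assumption places $[z_i, z_i \jj y]$ inside the block of $z_i$, so any cover $w$ of $z_i$ with $w \leq z_i \jj y$ satisfies $z_i \equiv w \pmod{\bgd}$. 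Applying~(C${}_{\jj}$) to $z_i \prec z_{i+1}$, $z_i \prec w$ yields $z_{i+1} \equiv z_{i+1} \jj w \pmod{\bgd}$; the identity $w \jj y = z_i \jj y$ (valid because $z_i \leq w \leq z_i \jj y$) gives $(z_{i+1} \jj w) \jj y = z_{i+1} \jj y$, and a secondary induction on $\length([z_{i+1} \jj w, z_{i+1} \jj y])$---strictly less than $\length([z_{i+1}, z_{i+1} \jj y])$ when $w \not\leq z_{i+1}$---supplies $z_{i+1} \jj w \equiv z_{i+1} \jj y \pmod{\bgd}$, so that transitivity closes the step.

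The main obstacle is the careful choice of cover $w$ and the termination of the nested induction: if every cover of $z_i$ inside its block happens to lie below $z_{i+1}$, then~(C${}_{\jj}$) degenerates to the trivial congruence $z_{i+1} \equiv z_{i+1} \pmod{\bgd}$ and no progress is made. Exhibiting a suitable cover escaping the downset of $z_{i+1}$---or, failing that, rerouting the argument through the dual~(C${}_{\mm}$)---is the technical heart of the proof, and it is here that the finiteness of $L$ together with the interval-block hypothesis are used in an essential way.
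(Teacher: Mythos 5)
You are right to flag your own proof as incomplete, but you have misdiagnosed where the trouble lies. The ``only if'' direction, conditions (i) and (ii) of Lemma~\ref{L:I.3.8}, and the reduction of the join-substitution property to a prime interval $x \prec y$ are all fine, and the obstacle you single out---that every cover of $z_i$ inside its block might lie below $z_{i+1}$---is actually illusory. If $z_i < z_i \jj y$, the interval-block of $z_i$ contains $[z_i, z_i\jj y]$ and hence some cover $w$ of $z_i$ with $w \leq z_i \jj y$; since $z_i \prec w$ and $z_i \prec z_{i+1}$, the relation $w \leq z_{i+1}$ forces $w = z_{i+1}$, so the bad case occurs only when $z_{i+1} \leq z_i \jj y$. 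But then $z_{i+1}$ and $z_{i+1}\jj y = z_i \jj y$ both lie in the interval-block of $z_i$ and the inductive step is finished outright; otherwise every admissible $w$ automatically satisfies $w \not\leq z_{i+1}$.

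The genuine gap is the well-foundedness of your nested induction. Writing $P(t)$ for the claim $t \equiv t \jj y \pmod{\bgd}$, your step for $P(z_{i+1})$ invokes both $P(z_i)$, where the element moves \emph{down} a cover, and $P(z_{i+1}\jj w)$, where it moves strictly \emph{up}. No measure you name decreases along both calls: the height of $t$ decreases in the first and increases in the second, while $\length[t, t\jj y]$ decreases in the second but can increase in the first (it is $0$ once $t \geq y$, for instance), and the obvious lexicographic combinations fail for the same reasons. Without a decreasing well-founded measure the scheme could in principle be circular, so the induction does not close as written. The paper sidesteps this by inducting on a different quantity: it proves the full statement ``$x \leq y$ and $x \equiv y \pmod{\bgd}$ imply $x \jj z \equiv y \jj z \pmod{\bgd}$'' by induction on $\length[x, y\jj z]$, reduces to the case $x = y \mm z$, picks covers $x \prec y_1 \leq y$ and $x \prec z_1 \leq z$, and arranges \emph{both} recursive calls to take place inside the proper subintervals $[y_1, y\jj z]$ and $[z_1, y\jj z]$, whose lengths are strictly smaller because their bottom elements lie strictly above $x$. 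If you reorganize your recursion so that each call raises the bottom of the ambient interval $[x, y \jj z]$ rather than tracking the single element $s$, your argument turns into the paper's.
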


\begin{proof}
First, we prove the join-substitution property:  
if $x \leq y$ and $\cng x=y (\bgd)$, then
\begin{equation}\label{E:Cjoin}
\cng x \jj z = y \jj z(\bgd).
\end{equation}

This is trivial if $y = z$, so we assume that $y \neq z$.
Clearly, we can also assume that $x<y$ and $x<z$. 

Let $U = [x, y\jj z]$.
We induct on $\length U$, the length of $U$. 

Using the fact that the intersection of two convex sublattices 
is either $\es$ or a convex sublattice, 
it follows that, for every interval $V$ of $L$, 
the classes of $\bgd \restr V$ are intervals. 
Hence, we can assume that $x=y\mm z$; 
indeed, otherwise the induction hypothesis 
applies to $V = [y \mm z, y \jj z]$ and $\bgd \restr{V}$,
since $\length V < \length U$, yielding \eqref{E:Cjoin}. 

Note that $\length U \geq 2$. 
If $\length U = 2$, 
then \eqref{E:Cjoin} is stated in $(C_\jj)$. 

So we can also assume that $\length U > 2$. 
Pick the elements $y_1,z_1\in L$ so that $x\prec y_1\leq y$ 
and $x \prec z_1 \leq z$. 
The elements $y_1$ and $z_1$ are distinct, 
since $y_1 = z_1$ would contradict that $x = y \mm z = y_1\mm z_1$. 
Let $w = y_1\jj z_1$. 
Since the $\bgd$-classes are intervals, 
$\cng x={y_1}(\bgd)$, 
therefore, (C$_\jj$) yields that 
\begin{equation}\label{E:two}
\cng z_1=w(\bgd).
\end{equation}
Let $I=[y_1,y\jj z]$ and $J=[z_1,y\jj z]$. 
Then $\length I, \length J < \length U$. 
Hence, the induction hypothesis applies to $I$ and $\bgd\restr I$, 
and we obtain that $\cng{w}={y\jj w}(\bgd)$. 
Combining this with \eqref{E:two}, by the transitivity of $\bgd$, 
we conclude that 
\begin{equation}\label{E:three}
\cng{z_1}={y\jj w}(\bgd).
\end{equation}
Therefore, applying the induction hypothesis to $J$ 
and $\bgd \restr J$, we conclude from \eqref{E:three} that 
\[
   \cng {x\jj z = z \jj z_1}={z \jj (y\jj w) = y\jj z}(\bgd),
\] 
proving \eqref{E:Cjoin}.

Second, we get the meet-substitution property by duality.
\end{proof}

Observe that for a finite semimodular lattice (C$_\jj$) states
that we have to check the join-substitution property 
only in covering-square sublattices.

Note that Lemma \ref{L:technical} holds in any lattice $L$
in which every interval has a finite length.

I hope that others, working with congruences of finite lattices, will also find this new Technical Lemma useful.

\end{document}